\title{Explicit Solution of a Tropical Optimization Problem with Application to Project Scheduling\thanks{Mathematical Methods and Optimization Techniques in Engineering: Proc. 1st Intern. Conf. on Optimization Techniques in Engineering (OTENG '13), WSEAS Press, 2013, pp.~39--45.}}
\author{Nikolai Krivulin\thanks{Faculty of Mathematics and Mechanics, Saint Petersburg State University, 28 Universitetsky Ave., Saint Petersburg, 198504, Russia, 
nkk\textless at\textgreater math.spbu.ru.}\thanks{The work was supported in part by the Russian Foundation for Humanities under Grant \#13-02-00338.}
}
\date{}
\newtheorem{theorem}{Theorem}
\newtheorem{lemma}[theorem]{Lemma}
\newtheorem{corollary}[theorem]{Corollary}
\begin{document}

\maketitle

\begin{abstract}
A new multidimensional optimization problem is considered in the tropical mathematics setting. The problem is to minimize a nonlinear function defined on a finite-dimensional semimodule over an idempotent semifield and given by a conjugate transposition operator. A special case of the problem, which arises in just-in-time scheduling, serves as a motivation for the study. To solve the general problem, we derive a sharp lower bound for the objective function and then find vectors that yield the bound. Under general conditions, an explicit solution is obtained in a compact vector form. This  result is applied to provide new solutions for scheduling problems under consideration. To illustrate, numerical examples are also presented.
\\

\textbf{Key-Words:} idempotent semifield, tropical optimization problem, nonlinear objective function, span seminorm, project scheduling.
\\

\textbf{MSC (2010):} 65K10, 15A80, 65K05, 90C08, 90B35
\end{abstract}

\section{Introduction}

Tropical (idempotent) mathematics is concerned with the theory and applications of semirings with idempotent addition. Tropical mathematics had its origin in seminal works \cite{Pandit1961Anew,CuninghameGreen1962Describing,Giffler1963Scheduling,Vorobjev1963Theextremal,Romanovskii1964Asymptotic}, which introduced it as a constructive tool to represent and solve real-world problems in operations research, such as scheduling problems that was examined in \cite{CuninghameGreen1962Describing,Giffler1963Scheduling}. Over the past few decades, significant progress has been achieved in the field, which is reflected in several monographs (see, e.g., \cite{Baccelli1993Synchronization,Kolokoltsov1997Idempotent,Golan2003Semirings,Heidergott2006Maxplus,Gondran2008Graphs,Krivulin2009Methods,Butkovic2010Maxlinear} for recent publications) and in a wide range of research papers.

Since early studies \cite{Cuninghamegreen1976Projections,Superville1978Various}, optimization problems that are formulated and solved within the framework of tropical mathematics have constituted an important research domain in the field. The problems are to minimize or maximize functions defined on finite-dimensional semimodules over idempotent semifields subject to constraints in the form of linear equalities and inequalities. Both linear and nonlinear objective functions are considered.

The span (range) seminorm, which is defined as the maximum deviation between components of a vector, is one of the objective functions that are encountered in the problems. This function is used as an optimality criterion for some problems in a range of areas from the analysis of Markov decision processes \cite{Bather1973Optimal,Puterman2005Markov} to the form-error measurement in precision metrology \cite{Murthy1980Minimum,Gosavi2012Form}. In the context of tropical mathematics, the span seminorm has been introduced by \cite{Cuninghamegreen1979Minimax,CuninghameGreen2004Bases}, where it was called the range seminorm. 

The span seminorm appeared in \cite{Butkovic2009Onsome,Kin2010Optimizing} in a tropical optimization problem drawn from machine scheduling. A manufacturing system is considered, in which machines start and finish under certain precedence constraints to produce components for final products. The problem is to find the starting times of each machine so that the completion times are spread over a shortest possible period of time. A solution to the problem is given in a somewhat complicated form that involves two reciprocally dual idempotent semifields.  

In this paper, we examine a more general problem of just-in-time scheduling \cite{Demeulemeester2002Project,Tkindt2006Multicriteria}. The problem is formulated in the common setting of project scheduling in terms of activities that are conducted under various precedence relations between their initiation and completion times. The goal is to design a schedule that provides, as far as possible, a single common completion time of all activities and can thus be solved by minimizing a span seminorm. Compared to that in \cite{Butkovic2009Onsome,Kin2010Optimizing}, the new problem takes into account additional constraints that limit the time intervals between initiation of activities.

We represent the precedence relations by linear vector equalities and inequalities in an idempotent semifield. The span seminorm is written in a straightforward, if not linear, vector form. As a result, we arrive at a constrained optimization problem with a nonlinear objective function, which involves a conjugate transposition operator, subject to linear constraints. 

The above mentioned problem serves as a motivating example and a starting point to define and solve a new general tropical optimization problem in a rather formal setting. We exploit the fact that the application of the solution of linear inequalities in \cite{Krivulin2006Solution,Krivulin2009Methods} reduces the problem to an unconstrained problem with new variables. We examine an extended version of the unconstrained problem that is formulated in terms of a general idempotent semifield. To solve the latter problem, the solution approach developed in \cite{Krivulin2005Evaluation,Krivulin2009Methods,Krivulin2012Anew} is used based on the derivation of a sharp lower bound for the objective function and then the construction of vectors that give the bound. 

We obtain a direct solution to the extended problem under fairly general conditions and represent it in a compact vector form in terms of the carrier semifield. Then, the above mentioned scheduling problems are solved as particular cases. Specifically, a new solution to the machine scheduling problem examined in \cite{Butkovic2009Onsome,Kin2010Optimizing} is obtained as a consequence.

The solutions are given in an explicit form that is suitable for both formal analysis and practical implementation. The results obtained, which are first aimed at formulating and solving new tropical optimization problems, may also serve as a contribution to project scheduling, which offers direct solutions rather than indirect solutions to many scheduling problems that can often be solved only by sophisticated computational algorithms \cite{Demeulemeester2002Project,Tkindt2006Multicriteria}.

The paper is organized as follows. It begins with a motivating problem drawn from just-in-time scheduling in Section~\ref{S-ME}. Furthermore, we give a brief introduction to basic definitions, notation, and preliminary results in tropical mathematics in Section~\ref{S-PR} to provide a formal framework for subsequent results. Section~\ref{S-OP} suggests the main results that include the definition of and a solution to a general optimization problems with nonlinear objective functions. Application of the results to optimal scheduling problems are presented and illustrated with numerical examples in Section~\ref{S-AOS}.

\section{Motivating Example}\label{S-ME}

We start with a real-world problem that is drawn from project scheduling and intended to both motivate and illustrate further results. The problem arises in just-in-time manufacturing and aims to design a schedule that minimizes the maximum deviation between the completion times of the  activities in a project subject to various activity precedence constraints. For more details and references on project scheduling, and specifically on just-in-time scheduling, one can consult \cite{Demeulemeester2002Project,Tkindt2006Multicriteria}. 

Consider a project which consists of $n$ activities (jobs, tasks) that operate under start-finish and start-start precedence constraints. The start-finish constraints require that a minimal time lag be held between the initiation of one activity and the completion of another. Each activity is assumed to be completed as early as possible to meet these constraints. The start-start constraints specify a minimal time lag between the initiation of any two activities. The problem is to find a schedule that provides, as far as possible under the constraints, a single common completion time for all activities.

For each activity $i=1,\ldots,n$, let $x_{i}$ be the initiation time, $y_{i}$ be the completion time, and $c_{ij}$ be the minimum possible time lag between the initiation of activity $j=1,\ldots,n$ and the completion of activity $i$. Given $c_{ij}$, the completion time of activity $i$ must satisfy the start-finish precedence relations
$$
x_{j}+c_{ij}\leq y_{i},
\quad
j=1,\ldots,n,
$$ 
with at least one inequality holding as equality. Note that we assume $c_{ii}\geq0$ for all $i$. Provided that $c_{ij}$ is not given for some $j$, we put $c_{ij}=-\infty$.

Now we combine the relations into one equality of the form
$$
\max_{1\leq j\leq n}(x_{j}+c_{ij})
=
y_{i}.
$$

Furthermore, let $d_{ij}$ be the minimum possible time lag between the initiation of activity $j$ and the initiation of activity $i$. Once again, we assume $d_{ij}=-\infty$ if no lag is specified for $i$ and $j$. Due to the start-start constraints, we have relations
$$
x_{j}+d_{ij}\leq x_{i},
\quad
j=1,\ldots,n,
$$ 
and rewrite them as one inequality
$$
\max_{1\leq j\leq n}(x_{j}+d_{ij})
\leq
x_{i}.
$$

We define an objective function for the optimal scheduling problem under study. We take the maximum deviation between the completion times as a criterion, which is equal to zero only when a schedule provides a single common completion time for all activities. The criterion has the form of the span seminorm:
$$
\max_{1\leq i\leq n}y_{i}
-
\min_{1\leq i\leq n}y_{i}
=
\max_{1\leq i\leq n}y_{i}
+
\max_{1\leq i\leq n}(-y_{i}).
$$

We now formulate an optimization problem of interest. Given $c_{ij}$ and $d_{ij}$ for all $i,j=1,\ldots,n$, the problem is to find $x_{1},\ldots,x_{n}$ such that
\begin{equation}
\begin{aligned}
&
\text{minimize}
&&
\max_{1\leq i\leq n}y_{i}
+
\max_{1\leq i\leq n}(-y_{i}),
\\
&
\text{subject to}
&&
\max_{1\leq j\leq n}(x_{j}+c_{ij})
=
y_{i},
\\
&
&&
\max_{1\leq j\leq n}(x_{j}+d_{ij})
\leq
x_{i},
\quad
i=1,\ldots,n.
\end{aligned}
\label{P-yyxayxbxI}
\end{equation}

Below, we represent the problem in the tropical mathematics setting and then solve it directly in a compact vector form.

\section{Preliminary results}\label{S-PR}

In this section, we give a brief overview of the main algebraic definitions, notation and preliminary results, which provide a basis for the subsequent solution to tropical optimization problems and applications to project scheduling. Both concise introductions to and thorough presentation of tropical mathematics are presented in various forms in a range of works, including \cite{Baccelli1993Synchronization,Cuninghamegreen1994Minimax,Kolokoltsov1997Idempotent,Golan2003Semirings,Gondran2008Graphs,Heidergott2006Maxplus,Akian2007Maxplus,Litvinov2007Themaslov,Butkovic2010Maxlinear}. Below, we mainly adhere to the results in \cite{Krivulin2006Solution,Krivulin2009Methods}, which offer a useful framework to obtain direct solutions in a compact form. For additional details, one can consult other publications listed above.

\subsection{Idempotent Semifield}

Let $\mathbb{X}$ be a set that is closed under two associative and commutative operations, addition $\oplus$ and multiplication $\otimes$, and equipped with their neutral elements, zero $\mathbb{0}$ and identity $\mathbb{1}$. Addition is idempotent, which means that $x\oplus x=x$ for all $x\in\mathbb{X}$. Multiplication is distributive over addition and invertible, which implies that each $x\in\mathbb{X}_{+}$, where $\mathbb{X}_{+}=\mathbb{X}\setminus\{\mathbb{0}\}$, has an inverse $x^{-1}$ to satisfy $x^{-1}\otimes x=\mathbb{1}$. Since $\mathbb{X}_{+}$ forms a group under multiplication, the structure $\langle\mathbb{X},\mathbb{0},\mathbb{1},\oplus,\otimes\rangle$ is commonly referred to as the idempotent semifield.

The integer power is introduced as usual. For any $x\in\mathbb{X}_{+}$ and integer $p>0$, we have $x^{0}=\mathbb{1}$, $\mathbb{0}^{p}=\mathbb{0}$, $x^{p}=x^{p-1}\otimes x$, and $x^{-p}=(x^{-1})^{p}$.

In what follows, the multiplication sign $\otimes$ is dropped for simplicity. The power notation is used in the sense of the above mentioned definition.

The idempotent addition produces a partial order, by which $x\leq y$ if and only if $x\oplus y=y$. The partial order is assumed to extend to a consistent total order over $\mathbb{X}$. The relation symbols and the minimization problems are thought in the context of this order for here on.

As examples of the general semifield under consideration, one can take
\begin{align*}
\mathbb{R}_{\max,+}
&=
\langle\mathbb{R}\cup\{-\infty\},-\infty,0,\max,+\rangle,
\\
\mathbb{R}_{\min,+}
&=
\langle\mathbb{R}\cup\{+\infty\},+\infty,0,\min,+\rangle,
\\
\mathbb{R}_{\max,\times}
&=
\langle\mathbb{R}_{+}\cup\{0\},0,1,\max,\times\rangle,
\\
\mathbb{R}_{\min,\times}
&=
\langle\mathbb{R}_{+}\cup\{+\infty\},+\infty,1,\min,\times\rangle,
\end{align*}
where $\mathbb{R}$ is the set of reals and $\mathbb{R}_{+}=\{x\in\mathbb{R}|x>0\}$. 

Specifically, the semifield $\mathbb{R}_{\max,+}$ has the null $\mathbb{0}=-\infty$ and identity $\mathbb{1}=0$. Each $x\in\mathbb{R}$ has its inverse $x^{-1}$ given by $-x$ in standard notation. For any $x,y\in\mathbb{R}$, the power $x^{y}$ is equal to the arithmetic product $xy$. The order, which is induced by addition, corresponds to the natural linear order on $\mathbb{R}$.

\subsection{Matrix Algebra}

We now consider matrices over $\mathbb{X}$ and denote the set of matrices with $m$ rows and $n$ columns $\mathbb{X}^{m\times n}$. A matrix with all entries equal to $\mathbb{0}$ is called the zero matrix. A matrix is row (column) regular, if it has no zero rows (columns). A matrix is regular, if it is both row and column regular.

For any matrices $\bm{A}=(a_{ij})$, $\bm{B}=(b_{ij})$, and $\bm{C}=(c_{ij})$ of appropriate dimensions, and a scalar $x$, matrix addition, matrix and scalar multiplication are routinely defined as
\begin{gather*}
\{\bm{A}\oplus\bm{B}\}_{ij}
=
a_{ij}\oplus b_{ij},
\qquad
\{\bm{B}\bm{C}\}_{ij}
=
\bigoplus_{k}b_{ik}c_{kj},
\\
\{x\bm{A}\}_{ij}
=
xa_{ij}.
\end{gather*}

For any matrix $\bm{A}$, its transpose is denoted $\bm{A}^{T}$.


Consider square matrices in $\mathbb{X}^{n\times n}$. A matrix that has all diagonal entries equal to $\mathbb{1}$ and off-diagonal entries equal to $\mathbb{0}$ is the identity matrix represented by $\bm{I}$. For any matrix $\bm{A}$, the trace is given by
$$
\mathop\mathrm{tr}\bm{A}
=
\bigoplus_{i=1}^{n}a_{ii}.
$$ 

The matrices with only one column (row) are routinely referred to as column (row) vectors. We denote the set of column vectors of order $n$ by $\mathbb{X}^{n}$.

A vector that has all components equal to $\mathbb{0}$ is the zero vector. A vector is regular if it has no zero components.

Let $\bm{x}$ be a regular column vector and $\bm{A}$ be a matrix. It is not difficult to see that the vector $\bm{A}\bm{x}$ is regular only when the matrix $\bm{A}$ is row regular. Similarly, the row vector $\bm{x}^{T}\bm{A}$ is regular only when $\bm{A}$ is column regular. 

As usual, a vector $\bm{y}$ is linearly dependent on vectors $\bm{x}_{1},\ldots,\bm{x}_{m}$ if there are scalars $c_{1},\ldots,c_{m}\in\mathbb{X}$ such that $\bm{y}=c_{1}\bm{x}_{1}\oplus\cdots\oplus c_{m}\bm{x}_{m}$. Specifically, a vector $\bm{y}$ is collinear with $\bm{x}$ when $\bm{y}=c\bm{x}$ for some scalar $c$.

For any nonzero vector $\bm{x}=(x_{i})\in\mathbb{X}^{n}$, we introduce the multiplicative conjugate transpose to be a row vector $\bm{x}^{-}=(x_{i}^{-})$ with components $x_{i}^{-}=x_{i}^{-1}$ if $x_{i}\ne\mathbb{0}$, and $x_{i}^{-}=\mathbb{0}$ otherwise. The following properties of the conjugate transposition are easy to verify.

For any regular vectors $\bm{x}$ and $\bm{y}$ of the same size, the component-wise inequality $\bm{x}\leq\bm{y}$ implies that $\bm{x}^{-}\geq\bm{y}^{-}$ and vice versa. 

For any nonzero column vector $\bm{x}$, we have $\bm{x}^{-}\bm{x}=\mathbb{1}$. Moreover, if the vector $\bm{x}$ is regular, then $\bm{x}\bm{x}^{-}\geq\bm{I}$.

\subsection{Solution to Linear Inequality}
Given a matrix $\bm{A}\in\mathbb{X}^{n\times m}$, consider a problem that is to find regular vectors $\bm{x}\in\mathbb{X}^{n}$ to satisfy the inequality
\begin{equation}
\bm{A}\bm{x}
\leq
\bm{x}.
\label{I-Axx}
\end{equation}

Below, we present solutions to the inequality, which are obtained in \cite{Krivulin2006Solution,Krivulin2009Methods} and written here in a more compact equivalent form. 

For each matrix $\bm{A}\in\mathbb{X}^{n\times n}$, we introduce a function
$$
\mathop\mathrm{Tr}(\bm{A})
=
\mathop\mathrm{tr}\bm{A}\oplus\cdots\oplus\mathop\mathrm{tr}\bm{A}^{n}.
$$

If $\mathop\mathrm{Tr}(\bm{A})\leq\mathbb{1}$, we use a star operator that sends $\bm{A}$ to the matrix
$$
\bm{A}^{\ast}
=
\bm{I}\oplus\bm{A}\oplus\cdots\oplus\bm{A}^{n-1}.
$$

\begin{lemma}\label{L-I-Axx}
Let $\bm{x}$ be the complete regular solution to inequality \eqref{I-Axx}. Then the following statements hold:
\begin{enumerate}
\item If $\mathop\mathrm{Tr}(\bm{A})\leq\mathbb{1}$, then $\bm{x}=\bm{A}^{\ast}\bm{u}$ for all regular vectors $\bm{u}$.
\item If $\mathop\mathrm{Tr}(\bm{A})>\mathbb{1}$, then there is no regular solution.
\end{enumerate}
\end{lemma}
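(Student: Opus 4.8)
The plan is to reduce the inequality $\bm{A}\bm{x}\leq\bm{x}$ to a fixed-point-style iteration and show that the iteration terminates after finitely many steps, with the termination controlled precisely by the quantity $\mathop\mathrm{Tr}(\bm{A})$. First I would observe that if $\bm{x}$ satisfies $\bm{A}\bm{x}\leq\bm{x}$, then applying $\bm{A}$ repeatedly and using monotonicity of matrix multiplication gives $\bm{A}^{k}\bm{x}\leq\bm{x}$ for every $k\geq1$; summing over $k=0,1,\ldots$ (the sum stabilizes because of idempotency) yields $(\bm{I}\oplus\bm{A}\oplus\bm{A}^{2}\oplus\cdots)\bm{x}\leq\bm{x}$. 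Conversely, if this infinite-sum matrix is well defined and we set $\bm{x}=(\bm{I}\oplus\bm{A}\oplus\cdots)\bm{u}$ for an arbitrary regular $\bm{u}$, then $\bm{A}\bm{x}\leq\bm{x}$ holds automatically, since multiplying by $\bm{A}$ merely shifts the series. So the whole statement hinges on deciding when $\bm{I}\oplus\bm{A}\oplus\bm{A}^{2}\oplus\cdots$ converges, and on showing that in the convergent case the series already closes at the $(n-1)$-st term, i.e. equals $\bm{A}^{\ast}$.

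The key step is the following dichotomy, which I would prove by a walk/cycle argument in the weighted digraph associated with $\bm{A}$. Entry $(i,j)$ of $\bm{A}^{k}$ is the maximal weight of a walk of length $k$ from $j$ to $i$; entry $(i,i)$ of $\bm{A}^{k}$ for $k\leq n$ thus records best closed walks, and $\mathop\mathrm{Tr}(\bm{A})$ is the largest weight of any cycle (of length between $1$ and $n$) in the graph. If some cycle has weight exceeding $\mathbb{1}$, traversing it repeatedly produces closed walks of unbounded weight through some vertex, so the diagonal entries of $\bm{A}^{k}$ grow without bound in the order, the series diverges, and moreover any candidate solution $\bm{x}$ would have to satisfy $\bm{A}^{k}\bm{x}\leq\bm{x}$ with the left side forced above $\bm{x}$ in some coordinate — contradiction; this gives statement~2. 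If every cycle has weight at most $\mathbb{1}$, then any walk of length $\geq n$ contains a cycle that can be excised without decreasing its weight, so every walk weight is attained by a walk of length $<n$; hence $\bm{A}^{k}\leq\bm{I}\oplus\bm{A}\oplus\cdots\oplus\bm{A}^{n-1}=\bm{A}^{\ast}$ for all $k$, the series equals $\bm{A}^{\ast}$, and the converse direction above shows $\bm{x}=\bm{A}^{\ast}\bm{u}$ solves the inequality for every regular $\bm{u}$.

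Two loose ends remain and I would address them explicitly. First, completeness: I must check that every regular solution $\bm{x}$ is of the form $\bm{A}^{\ast}\bm{u}$ — but from $\bm{A}\bm{x}\leq\bm{x}$ one derives $\bm{A}^{\ast}\bm{x}\leq\bm{x}$, while $\bm{A}^{\ast}\bm{x}\geq\bm{I}\bm{x}=\bm{x}$ always, so $\bm{x}=\bm{A}^{\ast}\bm{x}$, and taking $\bm{u}=\bm{x}$ exhibits the required representation. Second, regularity: when $\mathop\mathrm{Tr}(\bm{A})\leq\mathbb{1}$ the matrix $\bm{A}^{\ast}$ has $\mathbb{1}$ on the diagonal, hence is row regular, so $\bm{A}^{\ast}\bm{u}$ is indeed regular for every regular $\bm{u}$. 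The main obstacle is the cycle-excision lemma underlying the dichotomy — formalizing "a walk of length $\geq n$ repeats a vertex and the intervening closed sub-walk has weight $\leq\mathbb{1}$, so deleting it does not lower the weight" in purely semifield terms rather than hand-waving through graphs; everything else is bookkeeping with idempotency and monotonicity.
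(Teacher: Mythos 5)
The paper itself does not prove this lemma: it is imported as a known result from the author's earlier works \cite{Krivulin2006Solution,Krivulin2009Methods}, where it is obtained by algebraic derivations, so there is no in-paper argument to match yours against. Your proposal is a correct, self-contained alternative along the classical Kleene-star/walk-excision line, and its three pillars are exactly the right ones: (a) the excision argument showing that under $\mathop\mathrm{Tr}(\bm{A})\leq\mathbb{1}$ every walk weight is dominated by one of length below $n$, hence $\bm{A}^{n}\leq\bm{A}^{\ast}$ and so $\bm{A}\bm{A}^{\ast}\bm{u}\leq\bm{A}^{\ast}\bm{u}$; (b) completeness via $\bm{A}\bm{x}\leq\bm{x}\Rightarrow\bm{A}^{\ast}\bm{x}\leq\bm{x}$ together with $\bm{A}^{\ast}\geq\bm{I}$, giving $\bm{x}=\bm{A}^{\ast}\bm{x}$; (c) the contradiction for $\mathop\mathrm{Tr}(\bm{A})>\mathbb{1}$. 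A few points deserve tightening rather than hand-waving. First, drop the infinite series altogether: in a general idempotent semifield "convergence" of $\bm{I}\oplus\bm{A}\oplus\bm{A}^{2}\oplus\cdots$ is not even defined without a completeness assumption, and nothing in the proof needs it --- the finite inequality $\bm{A}^{n}\leq\bm{A}^{\ast}$ is all that statement 1 requires. Second, the graph language is legitimate in the abstract setting precisely because the assumed total order makes idempotent addition a maximum, so $\{\bm{A}^{k}\}_{ij}$ is the maximal $\otimes$-product of entries along walks of length $k$ from $j$ to $i$, and excising an elementary cycle of weight at most $\mathbb{1}$ cannot decrease this product by monotonicity of $\otimes$; spell that out once and the "main obstacle" you mention disappears. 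Third, in statement 2 make the role of regularity explicit: $\mathop\mathrm{Tr}(\bm{A})>\mathbb{1}$ gives $\{\bm{A}^{\ell}\}_{ii}>\mathbb{1}$ for some $i$ and $\ell\leq n$, and from $\bm{A}^{\ell}\bm{x}\leq\bm{x}$ one gets $\{\bm{A}^{\ell}\}_{ii}\otimes x_{i}\leq x_{i}$; it is only because $x_{i}\neq\mathbb{0}$ is invertible that this yields $\{\bm{A}^{\ell}\}_{ii}\leq\mathbb{1}$, a contradiction (the zero vector, for instance, always satisfies the inequality, so regularity is doing real work here). The auxiliary claim about unbounded growth of diagonal entries is unnecessary and, in an arbitrary semifield, not obviously true, so omit it. With these repairs your argument is complete and arguably more transparent than the algebraic derivations in the cited sources.
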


\section{Optimization Problem}\label{S-OP}

We now present the main result that solves an extended problem formulated in terms of a general idempotent semifield. We follow the solution approach, which is based on the derivation of sharp bounds on the objective function and applied to tropical optimization problems in a range of studies \cite{Krivulin2005Evaluation,Krivulin2009Methods,Krivulin2012Anew}.

Given matrices $\bm{A},\bm{B}\in\mathbb{X}^{m\times n}$ and vectors $\bm{p},\bm{q}\in\mathbb{X}^{m}$, the problem is to find regular vectors $\bm{x}\in\mathbb{X}^{n}$ such that
\begin{equation}
\begin{aligned}
&
\text{minimize}
&&
\bm{q}^{-}\bm{B}\bm{x}(\bm{A}\bm{x})^{-}\bm{p}.
\end{aligned}
\label{P-minqBxAxp}
\end{equation}

The following statement offers a direct solution to the problem.

\begin{theorem}\label{T-minqBxAxp}
Suppose that $\bm{A}$ is row regular and $\bm{B}$ is column regular matrices, $\bm{p}$ is nonzero and $\bm{q}$ is regular vectors. Denote $\Delta=(\bm{A}(\bm{q}^{-}\bm{B})^{-})^{-}\bm{p}$.

Then the minimum in problem \eqref{P-minqBxAxp} is equal to $\Delta$ and attained at any vector
$$
\bm{x}
=
\alpha(\bm{q}^{-}\bm{B})^{-},
\qquad
\alpha>\mathbb{0}.
$$
\end{theorem}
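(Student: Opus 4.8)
The plan is to follow the two-stage approach advertised in the introduction: first establish that $\Delta$ is a lower bound for the objective function over all regular $\bm{x}$, and then exhibit a family of vectors that attain it. For the lower bound, I would start from the observation that $\bm{x}$ is regular, so $\bm{x}\bm{x}^{-}\geq\bm{I}$ and hence $\bm{A}\bm{x}\bm{x}^{-} \geq \bm{A}$, which after conjugate transposition (order-reversing on regular vectors, and $\bm{A}\bm{x}$ is regular since $\bm{A}$ is row regular) yields $(\bm{A}\bm{x})^{-}\bm{x}\bm{x}^{-} \leq (\bm{A})\cdots$ — more carefully, I want to bound $(\bm{A}\bm{x})^{-}\bm{p}$ from below. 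The cleaner route is: for any scalar-valued quantity, insert $\mathbb{1} = \bm{x}^{-}\bm{x}$ type identities. Specifically $\bm{q}^{-}\bm{B}\bm{x}\,(\bm{A}\bm{x})^{-}\bm{p}$; using $\bm{x}(\bm{A}\bm{x})^{-} = \bm{x}(\bm{A}\bm{x})^{-}$ and the inequality $\bm{x}(\bm{A}\bm{x})^{-}\bm{A} \geq$ ... I would manipulate to show the product is $\geq (\bm{A}(\bm{q}^{-}\bm{B})^{-})^{-}\bm{p} = \Delta$. The key algebraic facts are the two displayed conjugation properties ($\bm{x}^{-}\bm{x}=\mathbb{1}$, $\bm{x}\bm{x}^{-}\geq\bm{I}$) together with order-reversal under conjugation.

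For the second stage, substitute $\bm{x} = \alpha(\bm{q}^{-}\bm{B})^{-}$ with $\alpha>\mathbb{0}$ into the objective and compute. The scalar $\alpha$ cancels because the objective is homogeneous of degree zero in $\bm{x}$ (one factor $\bm{x}$ in the numerator-like position and one $\bm{x}^{-}$ which scales as $\alpha^{-1}$), so without loss of generality $\alpha=\mathbb{1}$. Then $\bm{q}^{-}\bm{B}\bm{x} = \bm{q}^{-}\bm{B}(\bm{q}^{-}\bm{B})^{-}$; since $\bm{q}^{-}\bm{B}$ is a nonzero row vector (here I need $\bm{q}$ regular and $\bm{B}$ column regular to ensure $\bm{q}^{-}\bm{B}$ is regular, hence nonzero and the conjugate behaves well), the property $\bm{z}^{-}\bm{z}=\mathbb{1}$ applied to $\bm{z} = (\bm{q}^{-}\bm{B})^{T}$ gives $\bm{q}^{-}\bm{B}(\bm{q}^{-}\bm{B})^{-} = \mathbb{1}$. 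Wait — one must be careful about the direction; $\bm{q}^{-}\bm{B}$ is a row vector, so $(\bm{q}^{-}\bm{B})(\bm{q}^{-}\bm{B})^{-}$ is exactly the scalar product $\bm{w}^{-}\bm{w}$ for the column vector $\bm{w}=(\bm{q}^{-}\bm{B})^{T}$, which equals $\mathbb{1}$. Thus the objective collapses to $(\bm{A}(\bm{q}^{-}\bm{B})^{-})^{-}\bm{p} = \Delta$, matching the lower bound.

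I would then note that $\Delta$ is well-defined and the stated hypotheses are exactly what is needed: row regularity of $\bm{A}$ keeps $\bm{A}\bm{x}$ regular so that $(\bm{A}\bm{x})^{-}$ is meaningful and order-reversal applies; column regularity of $\bm{B}$ together with regularity of $\bm{q}$ makes $\bm{q}^{-}\bm{B}$ regular, so $(\bm{q}^{-}\bm{B})^{-}$ is a genuine regular column vector and the candidate $\bm{x}$ is regular as required; and $\bm{p}$ nonzero guarantees the objective is not identically $\mathbb{0}$ in a degenerate way. The main obstacle I anticipate is the lower-bound direction: getting the chain of conjugation inequalities in the right order without sign (order) errors, since conjugation reverses inequalities and the objective has conjugated factors on both ends. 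I would handle this by reducing everything to the scalar identities $\bm{x}^{-}\bm{x}=\mathbb{1}$ and $\bm{x}\bm{x}^{-}\geq\bm{I}$ and a single application of "$\bm{u}\leq\bm{v}$ regular $\Rightarrow$ $\bm{u}^{-}\geq\bm{v}^{-}$", applied to the vector $\bm{A}(\bm{q}^{-}\bm{B})^{-}$ compared against $\bm{A}\bm{x}$ rescaled by the scalar $\bm{q}^{-}\bm{B}\bm{x}$.
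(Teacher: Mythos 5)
Your second stage (attainment) is complete and coincides with the paper's computation: with $\bm{x}=\alpha(\bm{q}^{-}\bm{B})^{-}$ the factor $\alpha$ cancels, the product $\bm{q}^{-}\bm{B}(\bm{q}^{-}\bm{B})^{-}$ equals $\mathbb{1}$ because $\bm{q}^{-}\bm{B}$ is a regular row vector (your appeal to $\bm{q}$ regular and $\bm{B}$ column regular is exactly what is needed), and the objective collapses to $\Delta$.

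The gap is in the first stage: the lower bound, which is the crux of the theorem, is never actually derived. Your opening manipulation ($\bm{A}\bm{x}\bm{x}^{-}\geq\bm{A}$, then conjugate) is a dead end that you abandon — the order-reversal property you invoke is stated for regular vectors, whereas $\bm{A}\bm{x}\bm{x}^{-}$ is a matrix — and the rest is left as ``I would manipulate to show \dots''. Your closing sentence names the right comparison (between $\bm{A}(\bm{q}^{-}\bm{B})^{-}$ and $\bm{A}\bm{x}$ rescaled by the scalar $\bm{q}^{-}\bm{B}\bm{x}$) but does not say how to establish it, and that is precisely the missing step. The paper's route is: multiply $\bm{x}\bm{x}^{-}\geq\bm{I}$ on the left by $\bm{q}^{-}\bm{B}$ (not by $\bm{A}$) to get the regular row-vector inequality $\bm{q}^{-}\bm{B}\bm{x}\bm{x}^{-}\geq\bm{q}^{-}\bm{B}$; conjugate it (first use of order reversal), using that $\bm{q}^{-}\bm{B}\bm{x}$ is a nonzero scalar, to obtain $(\bm{q}^{-}\bm{B}\bm{x})^{-1}\bm{x}\leq(\bm{q}^{-}\bm{B})^{-}$; multiply on the left by $\bm{A}$; then, since $\bm{A}$ is row regular so both sides are regular column vectors, conjugate again (second use of order reversal) to get $\bm{q}^{-}\bm{B}\bm{x}(\bm{A}\bm{x})^{-}\geq(\bm{A}(\bm{q}^{-}\bm{B})^{-})^{-}$, and finally multiply by $\bm{p}$ on the right. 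Note that this takes two applications of the order-reversal property, not the single one your plan allows for — unless you replace the first conjugation by a direct componentwise check that $(\bm{q}^{-}\bm{B})_{j}x_{j}\leq\bm{q}^{-}\bm{B}\bm{x}$ for every $j$, which would also close the gap.
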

\begin{proof}
To verify the statement, we first show that $\Delta$ is a lower bound for the objective function in \eqref{P-minqBxAxp}, and then present vectors $\bm{x}$ that provide the bound.

Using the inequality $\bm{x}\bm{x}^{-}\geq\bm{I}$, we write
$$
\bm{q}^{-}\bm{B}\bm{x}\bm{x}^{-}
\geq
\bm{q}^{-}\bm{B}.
$$

Since the vector $\bm{q}$ is regular and the matrix $\bm{B}$ is column regular, the left and right sides of the last inequality are also regular. Furthermore, for any regular $\bm{x}$, we have $\bm{q}^{-}\bm{B}\bm{x}>\mathbb{0}$ and then write
$$
(\bm{q}^{-}\bm{B}\bm{x})^{-1}\bm{x}
=
(\bm{q}^{-}\bm{B}\bm{x}\bm{x}^{-})^{-}
\leq
(\bm{q}^{-}\bm{B})^{-}.
$$

Multiplication by $\bm{A}$ from the left gives
$$
(\bm{q}^{-}\bm{B}\bm{x})^{-1}\bm{A}\bm{x}
\leq
\bm{A}(\bm{q}^{-}\bm{B})^{-}.
$$

Considering that the matrix $\bm{A}$ is row regular, both sides of the inequality are regular vectors, and thus
$$
\bm{q}^{-}\bm{B}\bm{x}(\bm{A}\bm{x})^{-}
\geq
(\bm{A}(\bm{q}^{-}\bm{B})^{-})^{-}.
$$

After right multiplication of both sides by the vector $\bm{p}$, we finally have the lower bound in the form
$$
\bm{q}^{-}\bm{B}\bm{x}(\bm{A}\bm{x})^{-}\bm{p}
\geq
(\bm{A}(\bm{q}^{-}\bm{B})^{-})^{-}\bm{p}
=
\Delta
>
\mathbb{0}.
$$

It remains to verify that $\bm{x}=\alpha(\bm{q}^{-}\bm{B})^{-}$ yields the bound for any $\alpha>\mathbb{0}$. Indeed, substitution into the objective function and identity $\bm{x}^{-}\bm{x}=\mathbb{1}$ give
\begin{align*}
\bm{q}^{-}\bm{B}\bm{x}(\bm{A}\bm{x})^{-}\bm{p}
&=
\bm{q}^{-}\bm{B}(\bm{q}^{-}\bm{B})^{-}(\bm{A}(\bm{q}^{-}\bm{B})^{-})^{-}\bm{p}
\\
&=
(\bm{A}(\bm{q}^{-}\bm{B})^{-})^{-}\bm{p}
=
\Delta.
\hspace{4em}\qedhere
\end{align*}
\end{proof}

We conclude this section with solutions of two particular cases of problem \eqref{P-minqBxAxp}. First, assume that $\bm{B}=\bm{A}=\bm{I}$ and $\bm{p}=\bm{q}=\mathbb{1}$, where $\mathbb{1}$ denotes a vector that has all components equal to $\mathbb{1}$. We arrive at a problem in the form
\begin{equation*}
\begin{aligned}
&
\text{minimize}
&&
\mathbb{1}^{T}\bm{x}\bm{x}^{-}\mathbb{1}.
\end{aligned}
\end{equation*}

Application of Theorem~\ref{T-minqBxAxp} immediately gives $\Delta=\mathbb{1}$ as the minimum in the problem, which is attained at any vector $\bm{x}=\alpha\mathbb{1}$ for all $\alpha>\mathbb{0}$.

Finally, we examine a problem that underlies the design of optimal schedules to be given below. We put $\bm{A}=\bm{B}$, $\bm{p}=\bm{q}=\mathbb{1}$, and consider the problem
\begin{equation}
\begin{aligned}
&
\text{minimize}
&&
\mathbb{1}^{T}\bm{A}\bm{x}(\bm{A}\bm{x})^{-}\mathbb{1}.
\end{aligned}
\label{P-min1AxAx1}
\end{equation}

Using Theorem~\ref{T-minqBxAxp}, we readily obtain the following result.
\begin{corollary}\label{C-min1AxAx1}
Suppose that $\bm{A}$ is a regular matrix and denote $\Delta=(\bm{A}(\mathbb{1}^{T}\bm{A})^{-})^{-}\mathbb{1}$.

Then the minimum in problem \eqref{P-min1AxAx1} is equal to $\Delta$ and attained at any vector
$$
\bm{x}
=
\alpha(\mathbb{1}^{T}\bm{A})^{-},
\qquad
\alpha>\mathbb{0}.
$$
\end{corollary}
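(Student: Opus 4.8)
The plan is to obtain the corollary as a direct specialization of Theorem~\ref{T-minqBxAxp} by matching the data. First I would set $\bm{B}=\bm{A}$ and $\bm{p}=\bm{q}=\mathbb{1}$ in problem~\eqref{P-minqBxAxp}, so that it becomes problem~\eqref{P-min1AxAx1}, and verify that the hypotheses of the theorem are satisfied. Since $\bm{A}$ is assumed regular, it is in particular both row regular and column regular, so the requirement that $\bm{A}$ be row regular and that $\bm{B}=\bm{A}$ be column regular holds simultaneously. The vector $\mathbb{1}$ has all components equal to $\mathbb{1}\ne\mathbb{0}$, hence it is regular, and in particular nonzero; this covers the assumptions imposed on $\bm{q}$ and $\bm{p}$, respectively.

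Next I would record the elementary identity $\mathbb{1}^{-}=\mathbb{1}^{T}$, which follows from $\mathbb{1}^{-1}=\mathbb{1}$ applied componentwise, so that $\bm{q}^{-}\bm{B}=\mathbb{1}^{-}\bm{A}=\mathbb{1}^{T}\bm{A}$. Substituting into the conclusion of Theorem~\ref{T-minqBxAxp} then turns $\Delta=(\bm{A}(\bm{q}^{-}\bm{B})^{-})^{-}\bm{p}$ into $\Delta=(\bm{A}(\mathbb{1}^{T}\bm{A})^{-})^{-}\mathbb{1}$, and the family of minimizers $\bm{x}=\alpha(\bm{q}^{-}\bm{B})^{-}$ into $\bm{x}=\alpha(\mathbb{1}^{T}\bm{A})^{-}$ with $\alpha>\mathbb{0}$, which is precisely the claimed statement.

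There is essentially no obstacle here: the corollary is a verbatim instance of the theorem once the data are matched, and the only points needing a line of justification are the regularity bookkeeping and the identification $\mathbb{1}^{-}=\mathbb{1}^{T}$. If a self-contained argument were preferred, I would instead replay the two-part scheme from the proof of Theorem~\ref{T-minqBxAxp} with the specialized data: use $\bm{x}\bm{x}^{-}\geq\bm{I}$ together with the row regularity of $\bm{A}$ to derive the lower bound $\mathbb{1}^{T}\bm{A}\bm{x}(\bm{A}\bm{x})^{-}\mathbb{1}\geq\Delta$, and then substitute $\bm{x}=\alpha(\mathbb{1}^{T}\bm{A})^{-}$ and apply $\bm{x}^{-}\bm{x}=\mathbb{1}$ to see that the bound is attained. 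Invoking the theorem directly is, however, the shortest route and the one I would present.
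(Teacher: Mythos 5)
Your proposal is correct and matches the paper's own route: the paper obtains Corollary~\ref{C-min1AxAx1} precisely by specializing Theorem~\ref{T-minqBxAxp} with $\bm{B}=\bm{A}$ and $\bm{p}=\bm{q}=\mathbb{1}$, exactly as you do. Your added bookkeeping (regularity of $\bm{A}$ covering both row and column regularity, and the identification $\mathbb{1}^{-}=\mathbb{1}^{T}$) is the right, if routine, justification that the paper leaves implicit.
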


\section{Optimal Scheduling Problem}\label{S-AOS}

We are now in a position to place the scheduling problem described above into the framework of tropical mathematics and to give a direct solution to the problem in a compact vector form. 

\subsection{Representation of Scheduling Problem}

Consider problem \eqref{P-yyxayxbxI} and note that, in ordinary notation, it involves only the operations $\max$, addition, and additive inversion. Therefore, we can represent the problem in terms of the semifield $\mathbb{R}_{\max,+}$.

First, we write constraints as scalar equalities and inequalities:
\begin{align*}
\bigoplus_{j=1}^{n}c_{ij}x_{j}
&=
y_{i},
\\
\bigoplus_{j=1}^{n}d_{ij}x_{j}
&\leq
x_{i},
\qquad
i=1,\ldots,n.
\end{align*}

Using the matrices
$$
\bm{C}
=
(c_{ij}),
\qquad
\bm{D}
=
(d_{ij}),
$$
and the vectors
$$
\bm{x}
=
(x_{i}),
\qquad
\bm{y}
=
(y_{i}),
$$
the scalar constraints take the form
\begin{align*}
\bm{C}\bm{x}
&=
\bm{y},
\\
\bm{D}\bm{x}
&\leq
\bm{x}.
\end{align*}

Furthermore, we rewrite the objective function in \eqref{P-yyxayxbxI}. Since, for $\mathbb{R}_{\max,+}$, we have $\mathbb{1}=(0,\ldots,0)^{T}$, the objective function can be readily given by
$$
\left(\bigoplus_{i=1}^{n}y_{i}\right)\left(\bigoplus_{i=1}^{n}y_{i}^{-1}\right)
=
\mathbb{1}^{T}\bm{y}\bm{y}^{-}\mathbb{1}.
$$ 

Finally, by combining the objective function with the constraints, we arrive at the problem formulated in terms of $\mathbb{R}_{\max,+}$ to find vectors $\bm{x}$ and $\bm{y}$ such that
\begin{equation}
\begin{aligned}
&
\text{minimize}
&&
\mathbb{1}^{T}\bm{y}\bm{y}^{-}\mathbb{1},
\\
&
\text{subject to}
&&
\bm{C}\bm{x}
=
\bm{y},
\\
&
&&
\bm{D}\bm{x}
\leq
\bm{x}.
\end{aligned}
\label{P-1yy1CxyDxxI}
\end{equation}

\subsection{Solution to Scheduling Problem}

Under general conditions, a direct solution to \eqref{P-1yy1CxyDxxI} is obtained as follows.

\begin{theorem}\label{T-1yy1CxyDxxI}
Suppose that $\bm{C}$ is a regular matrix and $\bm{D}$ is a matrix that satisfies the condition $\mathop\mathrm{Tr}(\bm{D})\leq\mathbb{1}$. Denote $\Delta=(\bm{C}\bm{D}^{\ast}(\mathbb{1}^{T}\bm{C}\bm{D}^{\ast})^{-})^{-}\mathbb{1}$.

Then the minimum in problem \eqref{P-1yy1CxyDxxI} is equal to $\Delta$ and attained at
\begin{align*}
\bm{x}
&=
\alpha\bm{D}^{\ast}(\mathbb{1}^{T}\bm{C}\bm{D}^{\ast})^{-},
\\
\bm{y}
&=
\alpha\bm{C}\bm{D}^{\ast}(\mathbb{1}^{T}\bm{C}\bm{D}^{\ast})^{-}
\end{align*}
for all real numbers $\alpha$.
\end{theorem}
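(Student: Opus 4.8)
The plan is to reduce problem \eqref{P-1yy1CxyDxxI} to the already-solved problem \eqref{P-min1AxAx1} by eliminating the constraints. First I would handle the inequality constraint $\bm{D}\bm{x}\leq\bm{x}$: since $\mathop\mathrm{Tr}(\bm{D})\leq\mathbb{1}$, Lemma~\ref{L-I-Axx} tells us that the complete regular solution set is exactly $\{\bm{x}=\bm{D}^{\ast}\bm{u} : \bm{u}\text{ regular}\}$. Substituting this and $\bm{y}=\bm{C}\bm{x}=\bm{C}\bm{D}^{\ast}\bm{u}$ into the objective $\mathbb{1}^{T}\bm{y}\bm{y}^{-}\mathbb{1}$ turns problem \eqref{P-1yy1CxyDxxI} into the unconstrained problem of minimizing $\mathbb{1}^{T}(\bm{C}\bm{D}^{\ast})\bm{u}\,((\bm{C}\bm{D}^{\ast})\bm{u})^{-}\mathbb{1}$ over regular vectors $\bm{u}$, which is precisely \eqref{P-min1AxAx1} with the matrix $\bm{A}$ replaced by $\bm{C}\bm{D}^{\ast}$.

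Next I would check that $\bm{A}=\bm{C}\bm{D}^{\ast}$ satisfies the hypothesis of Corollary~\ref{C-min1AxAx1}, namely that it is regular. Regularity of $\bm{D}^{\ast}=\bm{I}\oplus\bm{D}\oplus\cdots\oplus\bm{D}^{n-1}$ is immediate because it dominates $\bm{I}$, so it has no zero rows or columns; then $\bm{C}\bm{D}^{\ast}$ is regular because $\bm{C}$ is regular and multiplying a regular matrix by $\bm{D}^{\ast}$ (which is row and column regular) preserves regularity. With this in hand, Corollary~\ref{C-min1AxAx1} immediately gives that the minimum of the reduced problem equals $\Delta=(\bm{C}\bm{D}^{\ast}(\mathbb{1}^{T}\bm{C}\bm{D}^{\ast})^{-})^{-}\mathbb{1}$, attained at $\bm{u}=\alpha(\mathbb{1}^{T}\bm{C}\bm{D}^{\ast})^{-}$ for all $\alpha>\mathbb{0}$.

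Finally I would translate the optimal $\bm{u}$ back to the original variables: $\bm{x}=\bm{D}^{\ast}\bm{u}=\alpha\bm{D}^{\ast}(\mathbb{1}^{T}\bm{C}\bm{D}^{\ast})^{-}$ and $\bm{y}=\bm{C}\bm{x}=\alpha\bm{C}\bm{D}^{\ast}(\mathbb{1}^{T}\bm{C}\bm{D}^{\ast})^{-}$, recording also that ``$\alpha>\mathbb{0}$ in $\mathbb{R}_{\max,+}$'' means exactly ``$\alpha$ is a real number'' in ordinary notation, which matches the statement. The one point requiring a little care is making sure that the substitution is an equivalence and not just one inclusion: every feasible $(\bm{x},\bm{y})$ of \eqref{P-1yy1CxyDxxI} arises from some regular $\bm{u}$ via $\bm{x}=\bm{D}^{\ast}\bm{u}$ (this is the completeness part of Lemma~\ref{L-I-Axx}), and conversely every such $\bm{u}$ gives a feasible pair, so the infima coincide; this is the step I would be most careful to state explicitly, though it is not technically hard given the lemmas already available.
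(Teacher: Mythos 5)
Your proposal is correct and follows essentially the same route as the paper: eliminate the inequality constraint via Lemma~\ref{L-I-Axx}, substitute $\bm{x}=\bm{D}^{\ast}\bm{u}$ and $\bm{y}=\bm{C}\bm{D}^{\ast}\bm{u}$ to reduce to problem \eqref{P-min1AxAx1} with $\bm{A}=\bm{C}\bm{D}^{\ast}$ (regular since $\bm{D}^{\ast}\geq\bm{I}$), apply Corollary~\ref{C-min1AxAx1}, and back-substitute. Your explicit remark that the substitution is an equivalence (completeness of the solution set) is a welcome clarification of a step the paper leaves implicit.
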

\begin{proof}
It follows from Lemma~\ref{L-I-Axx} that the inequality constraints in problem \eqref{P-1yy1CxyDxxI} have the solution $\bm{x}=\bm{D}^{\ast}\bm{u}$ for all regular vectors $\bm{u}$.

Based on the solution, the equality constraints become $\bm{y}=\bm{C}\bm{D}^{\ast}\bm{u}$.

Substitution of $\bm{y}$ in the objective function at \eqref{P-1yy1CxyDxxI} leads to problem \eqref{P-minqBxAxp} with $\bm{B}=\bm{A}=\bm{C}\bm{D}^{\ast}$, $\bm{p}=\bm{q}=\mathbb{1}$, and an unknown regular vector $\bm{u}$.

The obvious inequality $\bm{D}^{\ast}\geq\bm{I}$ implies that the matrix $\bm{C}\bm{D}^{\ast}$ is regular. The application of Corollary~\ref{C-min1AxAx1} gives the minimum $\Delta=(\bm{C}\bm{D}^{\ast}(\mathbb{1}^{T}\bm{C}\bm{D}^{\ast})^{-})^{-}\mathbb{1}$, which is attained at the vector $\bm{u}=\alpha(\mathbb{1}^{T}\bm{C}\bm{D}^{\ast})^{-}$. Back substitution of $\bm{u}$ leads to the desired solutions $\bm{x}=\alpha\bm{D}^{\ast}(\mathbb{1}^{T}\bm{C}\bm{D}^{\ast})^{-}$ and $\bm{y}=\alpha\bm{C}\bm{D}^{\ast}(\mathbb{1}^{T}\bm{C}\bm{D}^{\ast})^{-}$ for all $\alpha>\mathbb{0}$.
\end{proof}

Finally, we consider the problem, which was examined in \cite{Butkovic2009Onsome,Kin2010Optimizing} and can be now represented as
\begin{equation}
\begin{aligned}
&
\text{minimize}
&&
\mathbb{1}^{T}\bm{y}\bm{y}^{-}\mathbb{1},
\\
&
\text{subject to}
&&
\bm{C}\bm{x}
=
\bm{y}.
\end{aligned}
\label{P-1yy1Cxy}
\end{equation}

As a consequence of the solution to \eqref{P-1yy1CxyDxxI}, we get the following result.
\begin{corollary}\label{C-1yy1Cxy}
Suppose that $\bm{C}$ is a regular matrix and denote $\Delta=(\bm{C}(\mathbb{1}^{T}\bm{C})^{-})^{-}\mathbb{1}$.

Then the minimum in problem \eqref{P-1yy1Cxy} is equal to $\Delta$ and attained at
\begin{align*}
\bm{x}
&=
\alpha(\mathbb{1}^{T}\bm{C})^{-},
\\
\bm{y}
&=
\alpha\bm{C}(\mathbb{1}^{T}\bm{C})^{-}
\end{align*}
for any real number $\alpha$.

\end{corollary}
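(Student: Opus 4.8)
The plan is to reduce problem \eqref{P-1yy1Cxy} to an already solved problem, exactly as the sentence introducing the corollary suggests. Problem \eqref{P-1yy1Cxy} differs from \eqref{P-1yy1CxyDxxI} only in that the start-start constraint $\bm{D}\bm{x}\leq\bm{x}$ is absent; formally, one may regard \eqref{P-1yy1Cxy} as the instance of \eqref{P-1yy1CxyDxxI} in which $\bm{D}$ is the zero matrix. For the zero matrix $\mathop\mathrm{Tr}(\bm{D})=\mathbb{0}\leq\mathbb{1}$, so the hypothesis of Theorem~\ref{T-1yy1CxyDxxI} on $\bm{D}$ holds, and moreover $\bm{D}^{\ast}=\bm{I}\oplus\bm{D}\oplus\cdots\oplus\bm{D}^{n-1}=\bm{I}$ since every positive power of the zero matrix is zero. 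Substituting $\bm{D}^{\ast}=\bm{I}$ into the conclusion of Theorem~\ref{T-1yy1CxyDxxI} collapses $\bm{C}\bm{D}^{\ast}$ to $\bm{C}$ and yields $\Delta=(\bm{C}(\mathbb{1}^{T}\bm{C})^{-})^{-}\mathbb{1}$, $\bm{x}=\alpha(\mathbb{1}^{T}\bm{C})^{-}$, and $\bm{y}=\alpha\bm{C}(\mathbb{1}^{T}\bm{C})^{-}$, which is precisely the asserted solution.

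Equivalently, and this is the more self-contained route I would actually present, one eliminates $\bm{y}$ using the equality constraint. For any regular $\bm{x}\in\mathbb{X}^{n}$ the vector $\bm{y}=\bm{C}\bm{x}$ is regular because $\bm{C}$ is regular, hence row regular; thus the feasible set is exactly the set of regular $\bm{x}$, and on it the objective equals $\mathbb{1}^{T}\bm{C}\bm{x}(\bm{C}\bm{x})^{-}\mathbb{1}$. This is problem \eqref{P-min1AxAx1} with $\bm{A}=\bm{C}$. Since $\bm{C}$ is regular, Corollary~\ref{C-min1AxAx1} applies directly and gives the minimum $\Delta=(\bm{C}(\mathbb{1}^{T}\bm{C})^{-})^{-}\mathbb{1}$, attained at $\bm{x}=\alpha(\mathbb{1}^{T}\bm{C})^{-}$ for all $\alpha>\mathbb{0}$. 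Back substitution produces $\bm{y}=\bm{C}\bm{x}=\alpha\bm{C}(\mathbb{1}^{T}\bm{C})^{-}$. Finally, since \eqref{P-1yy1Cxy} is stated over $\mathbb{R}_{\max,+}$, where $\mathbb{0}=-\infty$, the requirement $\alpha>\mathbb{0}$ is the same as $\alpha$ being an arbitrary real number, matching the wording of the statement.

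There is no genuine obstacle here: the result is a direct specialization, and the only points requiring a moment's care are administrative. In the first route one must check that the zero matrix meets the hypothesis $\mathop\mathrm{Tr}(\bm{D})\leq\mathbb{1}$ and that $\bm{D}^{\ast}=\bm{I}$ in that case, so that dropping the constraint $\bm{D}\bm{x}\leq\bm{x}$ (which reduces to the vacuous $\mathbb{0}\leq\bm{x}$) is legitimate. In the second route one must note that regularity of $\bm{C}$ guarantees regularity of both $\bm{C}\bm{x}$ and $\mathbb{1}^{T}\bm{C}$, so that every conjugate transposition appearing in the formula for $\Delta$ is well behaved and Corollary~\ref{C-min1AxAx1} is genuinely applicable. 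Once these points are settled, Theorem~\ref{T-1yy1CxyDxxI} (or Corollary~\ref{C-min1AxAx1}) does all the work.
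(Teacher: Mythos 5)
Your proposal is correct and follows essentially the paper's own route: the paper states Corollary~\ref{C-1yy1Cxy} as a direct consequence of Theorem~\ref{T-1yy1CxyDxxI}, which is exactly your first argument (take $\bm{D}$ to be the zero matrix, so $\mathop\mathrm{Tr}(\bm{D})\leq\mathbb{1}$ and $\bm{D}^{\ast}=\bm{I}$), and your second route via Corollary~\ref{C-min1AxAx1} with $\bm{A}=\bm{C}$ is just the same machinery applied one step earlier. Both specializations, including the observation that $\alpha>\mathbb{0}=-\infty$ means any real $\alpha$ in $\mathbb{R}_{\max,+}$, are handled correctly.
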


Note that the solutions to the above problems are given up to a scale factor $\alpha$. In the context of scheduling, this form of solutions offers a room to accommodate additional constraints such as a due date for the project.

\subsection{Numerical Examples}

To illustrate the results obtained, we consider an example project of three activities under constraints given by the matrices
$$
\bm{C}
=
\left(
\begin{array}{ccc}
4 & 0 & \mathbb{0}
\\
2 & 3 & 1
\\
1 & 1 & 3
\end{array}
\right),
\qquad
\bm{D}
=
\left(
\begin{array}{rrr}
\mathbb{0} & -2 & 1
\\
0 & \mathbb{0} & 2
\\
-1 & \mathbb{0} & \mathbb{0}
\end{array}
\right),
$$
where the symbol $\mathbb{0}=-\infty$ is used for ease of exposition.

First, we do not take into account the start-start constraints to solve the reduced problem \eqref{P-1yy1Cxy}. After calculating the vectors
$$
(\mathbb{1}^{T}\bm{C})^{-}
=
\left(
\begin{array}{r}
-4
\\
-3
\\
-3
\end{array}
\right),
\qquad
\bm{C}(\mathbb{1}^{T}\bm{C})^{-}
=
\left(
\begin{array}{r}
0
\\
0
\\
0
\end{array}
\right),
$$
we apply Corollary~\ref{C-1yy1Cxy} and immediately arrive at the solution
$$
\Delta
=
0,
\qquad
\bm{x}
=
\alpha
\left(
\begin{array}{r}
-4
\\
-3
\\
-3
\end{array}
\right),
\qquad
\bm{y}
=
\alpha
\left(
\begin{array}{r}
0
\\
0
\\
0
\end{array}
\right),
$$
where $\alpha$ is any number such that $\alpha>\mathbb{0}=-\infty$.

Note that, in this situation, we really get a just-in-time schedule with a single common completion time of all activities.

Let us now incorporate the start-start constraints given by $\bm{D}$ into the problem. We take $\bm{D}$ to calculate
$$
\bm{D}^{2}
=
\left(
\begin{array}{rrr}
0 & \mathbb{0} & 0
\\
1 & -2 & 1
\\
\mathbb{0} & -3 & 0
\end{array}
\right).
$$

Furthermore, we obtain
$$
\bm{D}^{3}
=
\left(
\begin{array}{rrr}
-1 & -2 & 1
\\
0 & -1 & 2
\\
-1 & \mathbb{0} & -1
\end{array}
\right),
\qquad
\mathop\mathrm{Tr}(\bm{D})
=
0,
$$
and then evaluate the sum
$$
\bm{D}^{\ast}
=
\bm{I}
\oplus
\bm{D}
\oplus
\bm{D}^{2}
=
\left(
\begin{array}{rrr}
0 & -2 & 1
\\
1 & 0 & 2
\\
-1 & -3 & 0
\end{array}
\right).
$$

Since the first and the third columns in the matrix $\bm{D}^{\ast}$ are collinear, we can drop the last column to simplify the solution. We successively get the matrices
$$
\bm{D}^{\ast}
=
\left(
\begin{array}{rr}
0 & -2 
\\
1 & 0
\\
-1 & -3
\end{array}
\right),
\qquad
\bm{C}\bm{D}^{\ast}
=
\left(
\begin{array}{cc}
4 & 2
\\
4 & 3
\\
2 & 1
\end{array}
\right),
$$
and calculate the vector
\begin{align*}
(\mathbb{1}^{T}\bm{C}\bm{D}^{\ast})^{-}
&=
\left(
\begin{array}{r}
-4
\\
-3
\end{array}
\right),
\\
\bm{D}^{\ast}(\mathbb{1}^{T}\bm{C}\bm{D}^{\ast})^{-}
&=
\left(
\begin{array}{r}
-4
\\
-3
\\
-5
\end{array}
\right),
\\
\bm{C}\bm{D}^{\ast}(\mathbb{1}^{T}\bm{C}\bm{D}^{\ast})^{-}
&=
\left(
\begin{array}{r}
0
\\
0
\\
-2
\end{array}
\right).
\end{align*}

The application of Theorem~\ref{T-1yy1CxyDxxI} gives the results
$$
\Delta
=
2,
\quad
\bm{x}
=
\alpha
\left(
\begin{array}{r}
-4
\\
-3
\\
-5
\end{array}
\right),
\quad
\bm{y}
=
\alpha
\left(
\begin{array}{r}
0
\\
0
\\
-2
\end{array}
\right),
$$
where $\alpha$ is any real number.

The solution offers a schedule that is optimal with respect to the span seminorm. Note, however, that the constraints in the problem give no way for the schedule to provide a single common completion time of all activities.


\bibliographystyle{utphys}

\bibliography{Explicit_solution_of_a_tropical_optimization_problem_with_application_to_project_scheduling}

\end{document}